\documentclass[10pt,a4paper, final, twosided]{article}

 \usepackage[a4paper,left=35mm,right=35mm,top=30mm,bottom=30mm,marginpar=25mm]{geometry}
\usepackage{color}
\usepackage{mathtools}
\usepackage{amsfonts,hyperref,url}
\usepackage{amssymb}
\usepackage{amsmath}
\usepackage{amssymb}
\usepackage{amsthm}
\usepackage[latin1]{inputenc}
\usepackage{eurosym}
\usepackage{graphicx}
\usepackage{epsfig}
\usepackage{hyperref}
\usepackage{dsfont}
\usepackage{appendix}

 \usepackage{array}

\allowdisplaybreaks

\newcommand{\ba}{\begin{eqnarray}}
\newcommand{\ea}{\end{eqnarray}}

\makeindex

\renewcommand{\div}{\operatorname{div}}

\newcommand{\Rr}{{\mathbb{R}}}

\newcommand{\Tt}{{\mathbb{T}}}

\newcommand{\td}{{\mathbb{T}^d}}

\newcommand{\epsi}{\varepsilon}


\theoremstyle{plain}

\newtheorem{theorem}{Theorem}

\newtheorem{proposition}[theorem]{Proposition}

\newtheorem*{theorem*}{Theorem}

\theoremstyle{defi}
\newtheorem{definition}[theorem]{Definition}

\newtheorem{stp}{Step}

\theoremstyle{remexample}
\newtheorem{remark}[theorem]{Remark}

\newtheorem{teo}[theorem]{Theorem}

\theoremstyle{ass}

\begin{document}
	
	\title{On a Mean Field Optimal Control Problem}
	
	\author{Jos\'e A. Carrillo\footnote{
		Department of Mathematics, Imperial College London, London SW7 2AZ, UK. E-mail: carrillo@imperial.ac.uk},
		Edgard A. Pimentel\footnote{
		Department of Mathematics, Pontifical Catholic University of Rio de Janeiro, Rio de Janeiro, 22495-900, Brazil. E-mail: pimentel@puc-rio.br},
		Vardan K. Voskanyan\footnote{
			Centro de Matem\'atica da Universidade de Coimbra, Apartado 3008, EC Santa Cruz, 3001 - 501 Coimbra, Portugal. E-mail: vartanvos@gmail.com }}
	
	\date{} 

	\maketitle
	\begin{abstract}
\noindent In this paper we consider a mean field optimal control problem with an aggregation-diffusion constraint, where agents interact through a potential, in the presence of a Gaussian noise term. Our analysis focuses on a PDE system coupling a Hamilton-Jacobi and a Fokker-Planck equation, describing the optimal control aspect of the problem and the evolution of the population of agents, respectively. The main contribution of the paper is a result on the existence of solutions for the aforementioned system. We notice this model is in close connection with the theory of mean-field games systems. However, a distinctive feature concerns the nonlocal character of the interaction; it affects the drift term in the Fokker-Planck equation as well as the Hamiltonian of the system, leading to new difficulties to be addressed. 

\bigskip

\noindent{\bf Keywords}: Mean field games; Nonlocal Fokker-Planck equations; Nonlocal Hamiltonians; Existence of solutions.

\bigskip

\noindent{\bf MSC 2010}: 35K10; 35B45; 35A01. 

\end{abstract}

\maketitle

\section{Introduction}
The multi-agent framework has been used to model swarming or collective behavior in a number of applications. Those include animal herding or flocking \cite{PhysRevLett.96.104302,Camazine_etal,CFTVreview,HCH,lukeman}, cell movement \cite{HP,PBSG,Pbook}, cell adhesion \cite{GC,BDZ,BCDPZ}, alloy clustering \cite{nano1}, opinion formation \cite{DMPW}, robotics \cite{TFYE}, among others. 

Controlling these collective-behavior models, both at the microscopic and macroscopic levels, has recently become a very popular research direction \cite{CFPT13,FoSo,CFPT15,BFK,FPR,PRT,ACFK}. Of particular interest is the mean-field limit of a system of $N$-agents interacting through a potential $W$ in presence of Gaussian noise. It leads to the control problem of a density $\rho$ solving an aggregation-diffusion equation of the form:
\begin{equation}\label{Eq.control}
	\begin{cases}
		\frac{\partial \rho}{\partial t}=\div\left( (\nabla W\star \rho)\rho+F\rho\right)+\Delta \rho& \;\;\;\;\;\mbox{in}\;\;\;\;\;\Tt^d\times(0,\infty)\\
		\rho(x, 0)=\rho_0(x)&\;\;\;\;\;\mbox{in}\;\;\;\;\;\Tt^d
	\end{cases}	
\end{equation}
where the control $F$ is chosen to minimize the functional:
\begin{equation}\label{eq_control1}
\inf_{\rho, F}\left\{E(F)+\int_{\Tt^d}\phi_T(x)\rho(x,T) dx\right\}
\end{equation}
with
\begin{equation}\label{eq_control2}
	E(F)=\int_0^T\!\!\!\int_{\Tt^d}\left[L(x,\rho)+\frac{|F|^2} 2\right]\rho dx dt.
\end{equation}

A distinctive feature in \eqref{Eq.control} regards the drift term. In addition to the optimal control component $F$, it depends on $\rho$ itself through the interactions driven by $W$. Perhaps more involving is the fact that such a dependence is nonlocal; this character of the drift term poses important mathematical difficulties to the program developed in this paper.

Typical potentials used in applications are radially symmetric $W(x)=w(|x|)$ in the whole space chosen to be repulsive in the short range and attractive in the long range. Some examples include the Morse or power-law potentials
\[
 w(r)=-C_Ae^{-\frac{r}{l_A}}+C_Re^{-\frac {r}{l_R}}
\]
or 
\[
 w(r)=\frac{r^a}{a}- \frac{r^b}{b}\,,
\]
for well-prepared parameters $Cl^d<1$, with $C:=C_A/C_R$ and $l:=l_A/l_R$, and $a>b>-d$ respectively. See \cite{PhysRevLett.96.104302,predict,CHM} and the references therein. Many other biological applications use finite range potentials, i.e. compactly supported, but with local behaviors near the origin, similar to the Morse or power-laws potentials above. We refer the reader to \cite{CP,BDZ,BCDPZ,CCS}, to name just a few. Notice that these potentials can be used in the periodic setting $x\in\Tt^d$, see \cite{CP,BDZ,BCDPZ}.

The optimal control problem in \eqref{eq_control1}-\eqref{eq_control2} can be written as
\[
\inf_{\rho, F\in \eqref{Eq.control}}\sup_{\phi}\left\{E(F)+\int_{\Tt^d}\phi_T(x)\rho(x,T) dx- \int_{0}^{T}\!\!\!\int_{\Tt^d}\phi\left[\frac{\partial \rho}{\partial t}-\div\left( (\nabla W\star \rho)\rho+F\rho\right)-\Delta \rho\right]\right\},
\]
reminiscent of optimal transport problems \cite{Bre}. By changing the order of the infimum and the supremum, we obtain the dual problem
\[
\sup_{\phi}\inf_{(\rho, F)\in \eqref{Eq.control}}\left\{ \int_0^T\!\!\!\int_{\Tt^d}\left[L(x,\rho)+\frac{|F|^2} 2+\frac{\partial \phi}{\partial t}-(\nabla W\star \rho)\cdot \nabla \phi-F\cdot \nabla \phi+\Delta\phi\right]\rho(x,t) dx dt\right\}.
\]
At least heuristically, this optimization problem leads to a system of PDEs of the form
\begin{equation}\label{Eq.main-secondorder}
\begin{cases}
\displaystyle -\phi_t+\frac{|\nabla \phi|^2}{2}+(\nabla W\star \rho)\cdot \nabla \phi+\nabla W\star (\rho \nabla \phi)-U(x,\rho)=\Delta \phi&\;\;\;\mbox{in}\;\;\;\Tt^d\times(0,T)\\[2mm]
\displaystyle \rho_t=\div\left( (\nabla W\star \rho)\rho+\rho\nabla \phi\right)+\Delta\rho&\;\;\;\mbox{in}\;\;\;\Tt^d\times(0,T)\\[2mm]
\phi(x,T)=\phi_T(x),\, \rho(x,0)=\rho_0(x)&\;\;\;\mbox{in}\;\;\;\Tt^d,
\end{cases}
\end{equation}
where $U(x,\rho)=L(x,\rho)+\frac{\delta L}{\delta \rho}(x,\rho)\rho$, $F=-\nabla\phi$, and 
$$
\nabla W\star (\rho \nabla \phi) = \sum_{i=1}^d \frac{\partial W}{\partial x_i}\star \left(\rho \frac{\partial \phi}{\partial x_i}\right).
$$
It is worthy noticing that the system in \eqref{Eq.main-secondorder} can be regarded as a first order condition associated with \eqref{eq_control1}-\eqref{eq_control2}. These first-order optimality conditions were obtained both formally and rigorously in \cite{ACFK}. We pose the problem in the $d$-dimensional torus $\Tt^d$ to focus on the main difficulties related to the regularity of the Hamilton-Jacobi equation in the system \eqref{Eq.main-secondorder}. 

\begin{remark}\label{rem:convtor}
Throughout the paper we use the convention to identify functions on the torus $\mathbb{T}^d$ with 1-periodic functions on the cube $Q^d=[-\frac 1 2, \frac 1 2]^d.$ Hence, since the function $|x|$ has periodic boundary data on that cube it can be considered as function on the torus, and therefore radial potentials can be considered in the torus as well. Furthermore, the convolution $\nabla W \star \rho$ is defined by:
\[
\nabla W \star \rho(x):=\int_{Q^d} \nabla W(x-y)\rho(y) dy=\int_{Q^d} \nabla W(y)\rho(x-y) dy,
\]
where the values $\nabla W(x-y),\,\rho(x-y)$ are defined by periodicity.
\end{remark}

The contribution of the present paper concerns the existence and regularity of the solutions to \eqref{Eq.main-secondorder}. Our main result is the following:

\begin{teo}[Existence of solutions]
	\label{teo-main}
	Let the following assumptions hold:
	\begin{enumerate}
	        \item[{\bf (A1)}] Regularity of the potential: $W\in W^{2, \infty}(\Tt^d)=C^{1,1}(\Tt^d)$.
	        
		\item[{\bf (A2)}] Regularity of the coupling: $U:\Tt^d\times L^2(\Tt^d)\to \Rr$ is uniformly bounded in $x$ in $\mathcal{C}^2$ norm and continuous in $m$ in $L^2$ norm, i.e., there exists a constant $C>0$ such that
			\[
			\|U(\cdot, m)\|_{\mathcal{C}^2},\ \|\phi_T\|_{\mathcal{C}^2}\leq C,\,\forall m\in L^2(\Tt^d),
			\]
		for every $m \in L^2(\Tt^d)$, and
		\[
		|U(x,m_1)-U(x,m_2)|\leq C\|m_1-m_2\|_{L^2(\Tt^d)},
		\]
		for every $x\in \Tt^d\times(0,T)$ and for all $m_1,\,m_2\in L^2(\Tt^d)$.

		\item[{\bf (A3)}] Initial-terminal boundary conditions: $\rho_0\in L^2(\Tt^d)$ with $\int \rho_0=1$, $\rho_0\geq 0$, and $\phi_T\in\mathcal{C}^1(\Tt^d)$.
	\end{enumerate}
	Then, there exists a solution $(\phi, \rho)$ to the optimal control system \eqref{Eq.main-secondorder} with $\rho\in L^\infty(0,T;L^2(\Tt^d))$ and	$\phi \in \mathcal{C}^{1,2}(\Tt^d\times [0,T] )$. 
\end{teo}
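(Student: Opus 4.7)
The strategy I would pursue is a Schauder fixed-point argument on the space of densities. Fix $R>0$ to be chosen later and set
\[
\mathcal{K}:=\Bigl\{m\in L^\infty(0,T;L^2(\Tt^d)):\,m\geq 0,\ \textstyle\int_{\Tt^d} m(x,t)\,dx=1\text{ for a.e. }t,\ \|m\|_{L^\infty(0,T;L^2)}\leq R\Bigr\},
\]
which is a closed convex subset of $L^2(\Tt^d\times(0,T))$. Given $m\in\mathcal{K}$, I would first solve the HJ equation with $\rho$ replaced by $m$ in the nonlocal couplings and in $U$, then plug the resulting $\nabla\phi$ (together with $m$ in the nonlocal drift) into the Fokker--Planck equation, defining $\Phi(m):=\rho$. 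A fixed point of $\Phi$ yields a solution of \eqref{Eq.main-secondorder}.

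\textbf{Hamilton--Jacobi step.} With $m\in\mathcal{K}$ frozen, assumptions (A1)--(A2) give $\|\nabla W\star m\|_\infty\leq\|\nabla W\|_\infty R$ and $\|U(\cdot,m)\|_{\mathcal{C}^2}\leq C$, while the linear map $v\mapsto \nabla W\star(m\,v)$ is bounded from $L^2(\Tt^d)$ into $L^\infty(\Tt^d)$ with norm at most $\|\nabla W\|_\infty R$. I would resolve the nonlocality by an inner iteration: given $\psi$, let $\phi=\mathcal{S}(\psi)$ solve the semilinear parabolic problem
\[
-\phi_t+\tfrac12|\nabla\phi|^2+(\nabla W\star m)\cdot\nabla\phi-U(x,m)+\nabla W\star(m\,\nabla\psi)=\Delta\phi,\qquad \phi(\cdot,T)=\phi_T,
\]
which has bounded drift, quadratic Hamiltonian and $L^\infty$ source. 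Comparison produces an $L^\infty$ bound on $\phi$; a Bernstein-type argument on $|\nabla\phi|^2$ yields $\|\nabla\phi\|_\infty\leq C_1(R)$; and parabolic Schauder theory upgrades this to $\phi\in\mathcal{C}^{1,2}(\Tt^d\times[0,T])$ with estimates independent of $\psi$ in bounded sets. The map $\mathcal{S}$ is then continuous and compact on a suitable closed bounded subset of $C([0,T];C^1(\Tt^d))$, and a further Schauder argument produces a fixed point solving the full nonlocal HJ equation.

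\textbf{Fokker--Planck step and self-mapping.} With $\nabla\phi$ in hand, the FP equation is linear parabolic in divergence form with drift $b:=\nabla\phi+\nabla W\star m\in L^\infty$. Testing with $\rho$ and Young's inequality give
\[
\tfrac12\tfrac{d}{dt}\|\rho\|_{L^2}^2+\|\nabla\rho\|_{L^2}^2\leq \tfrac12\|b\|_\infty^2\|\rho\|_{L^2}^2,
\]
whence Grönwall controls $\|\rho\|_{L^\infty(0,T;L^2)}$ in terms of $\|\rho_0\|_{L^2}$, $R$, and the data; choosing $R$ appropriately enforces $\Phi(\mathcal{K})\subset\mathcal{K}$. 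Nonnegativity and mass conservation are preserved by the FP structure. Furthermore $\rho_t\in L^2(0,T;H^{-1})$, so by Aubin--Lions $\Phi(\mathcal{K})$ is precompact in $L^2(\Tt^d\times(0,T))$. Continuity of $\Phi$ in the $L^2$ topology follows from stability of the nonlocal HJ equation under $L^2$-perturbations of $m$ (using (A2) together with the smoothing properties of convolution against $\nabla W$) and the classical $L^\infty$-stability of linear parabolic equations under drift perturbations. Schauder's theorem then produces $m^\ast$ with $\Phi(m^\ast)=m^\ast$, and $(\phi^\ast,m^\ast)$ is a solution of \eqref{Eq.main-secondorder} with the claimed regularity.

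\textbf{Main obstacle.} The delicate point is the HJ step. The nonlocal gradient term $\nabla W\star(m\,\nabla\phi)$ is linear in $\nabla\phi$ but genuinely nonlocal, so it does not fit inside the classical Bernstein framework: differentiating it produces nonlocal contributions rather than sign-definite local corrections. The a priori gradient estimate, the $\mathcal{C}^{1,2}$ bound, and the stability estimate underpinning the continuity of $\Phi$ must all be arranged to absorb this term via its $L^2\to L^\infty$ boundedness, using the smoothing provided by $\nabla W\in L^\infty$. A secondary technical point is to ensure the inner fixed-point is well-posed on the whole interval $[0,T]$, which I would obtain by iterating on short intervals using the uniform a priori bound on $\|\nabla\phi\|_\infty$.
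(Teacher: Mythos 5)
Your architecture is genuinely different from the paper's: you iterate on the density $m$, which linearizes the Fokker--Planck step (a real simplification --- the paper must run a separate inner fixed point in a set $\mathcal{M}\subset\mathcal{C}([0,T],\mathcal{P}(\Tt^d))$ plus a uniqueness result of Porretta to solve the nonlocal FP equation at each step), but it forces you to solve the \emph{fully nonlocal} Hamilton--Jacobi equation, with $\nabla W\star(m\,\nabla\phi)$ depending on the unknown, at every step of the outer iteration. The paper avoids exactly this by iterating on $\phi$ and freezing the \emph{old} $\phi$ in the term $\nabla W\star(\rho\,\nabla\phi)$, so its HJ step is a standard viscous HJ equation with a given bounded source. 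This choice is not cosmetic: it is what lets the paper obtain the Lipschitz bound. The mechanism is the nonlinear adjoint method, which yields
\[
\|\nabla\Phi(\cdot,t)\|_{\infty}\;\leq\;A+B\int_t^T\|\nabla\phi(\cdot,s)\|_{\infty}\,ds ,
\]
with $A,B$ depending only on the data (using that $m$ has unit mass, so all convolution bounds are $R$-free), and the invariance of the cone $\{\|\nabla\phi(\cdot,t)\|_\infty\le Ae^{B(T-t)}\}$ closes the self-map. In your scheme this same inequality applied at a fixed point of the inner map $\mathcal{S}$ would give the a priori bound by Gr\"onwall, but you never derive it: you invoke a ``Bernstein-type argument'' while simultaneously (and correctly) observing in your final paragraph that Bernstein does not survive the nonlocal gradient term, and your fallback of ``iterating on short intervals using the uniform a priori bound on $\|\nabla\phi\|_\infty$'' is circular, since that uniform bound is precisely what is missing. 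This is the central estimate of the whole proof and it is absent.

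A second genuine gap: for Schauder on $\mathcal{K}$ you need $\Phi$ to be a single-valued continuous map, hence you need \emph{uniqueness} of the solution of the nonlocal HJ equation for fixed $m$, and your inner Schauder argument only produces existence. Uniqueness here is not routine: for two candidate solutions $\phi,\tilde\phi$ the nonlocal terms $\nabla W\star(m\,\nabla\phi)$ and $\nabla W\star(m\,\nabla\tilde\phi)$ differ, so the Lasry--Lions duality computation (which the paper uses in Step 2 of its proof, where it works precisely because the frozen source is \emph{common} to both equations and cancels in the difference) leaves an uncontrolled cross term. The same issue undermines your claimed ``stability of the nonlocal HJ equation under $L^2$-perturbations of $m$''. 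Minor additional points: your convolution bounds should use $\|m(\cdot,t)\|_{L^1}=1$ rather than $R$ (otherwise the choice of $R$ closing $\Phi(\mathcal{K})\subset\mathcal{K}$ risks circularity), and the paper obtains the compactness needed for Schauder through $L^p$ parabolic regularity plus Morrey embedding in $\mathcal{C}^{\alpha,1+\alpha}$, whereas your Aubin--Lions compactness in $L^2$ is adequate for your outer space but does not by itself deliver the $\mathcal{C}^{1,2}$ regularity claimed in the theorem.
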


\begin{remark}
Typical potentials with power law behavior satisfying our assumptions are $$W(x)\simeq\frac{|x|^a}a - \frac{|x|^b}{b}\text{, with }a, b\geq 2,$$
to be understood in the sense of Remark \ref{rem:convtor}.
\end{remark}

The system in \eqref{Eq.main-secondorder} relates to the mean-field games (MFG, for short) introduced by Jean-Michel Lasry and Pierre-Louis Lions \cite{ll1, ll2, ll3, ll4}, and, independently, by Minyi Huang, Roland P. Malham{\'e}, and Peter E. Caines \cite{Caines2,Caines1}. Indeed, it couples a Hamilton-Jacobi equation describing an optimization problem, with the Fokker-Planck equation accounting for the evolution of the population. An interesting feature of \eqref{Eq.main-secondorder} regards the lack of an adjoint structure, which yields further difficulties from the regularity viewpoint.

In the recent years, the MFG theory developed in a number of directions. The existence and uniqueness of solutions is the topic of \cite{cgbt,cd2,CLLP,cllp13,cargra,pim4,pim2,pim3,marcir1,marcir2,marcir3,marcir4}, whereas the study of numerical methods is the object of \cite{achdou2013finite,MR2928376,CDY,DY}, to name just a few. In \cite{carpor,benone,bentwo,cardel1,delaruegalera,gangboswiech} the authors examine the master equation. Applications of the MFG framework to social sciences can be found in \cite{moll,bmoll1,bmoll2,bmoll3}. We also refer the reader to the monographs \cite{cardaliaguet,bensoussan,pim1} and the lists of references therein.

An important toy-model in the context of time-dependent mean-field games has the form
\begin{equation*}
\begin{cases}
-u_t-\nu\Delta u+H(D_xu, x, \theta)=0&\;\;\;\;\;\mbox{in}\;\;\;\;\;\Tt^d\times(0,T)\\
\theta_t -\nu\Delta \theta-\div(D_pH(D_xu, x, \theta)\theta)=0&\;\;\;\;\;\mbox{in}\;\;\;\;\;\Tt^d\times(0,T)\\
u(x, T)=\psi(x, \theta(T)),\quad
\theta(x, 0)=\theta_0&\;\;\;\;\;\mbox{in}\;\;\;\;\;\Tt^d.
\end{cases}
\end{equation*}
This class of systems consists of a Hamilton-Jacobi equation coupled with a Fokker-Plank equation; the latter is the formal adjoint, in the $L^2$-sense, of the linearization of the former one. This (adjoint) structure was exploited in proving the existence of classical solutions \cite{GPatVrt, GPM1, GM, PV15} through the so-called non-linear adjoint methods, introduced by L.C. Evans \cite{E3}. The system in \eqref{Eq.main-secondorder} does not present this adjoint structure because of the additional term $\nabla W\star (\rho \nabla \phi)$, which substantially complicates the arguments. However, the non-linear adjoint method is still useful in proving the Lipschitz continuity of a solution $\phi$ to the Hamilton-Jacobi equation in \eqref{Eq.main-secondorder}. 

The nonlocal interaction among agents affects the PDE system accounting for the optimality conditions of the model. Here, the drift term in the Fokker-Planck equation becomes nonlocal. A similar phenomenon takes place in the Hamilton-Jacobi counterpart of the system: the Hamiltonian becomes nonlocal \emph{with respect to the gradient of the solutions}.

In this context, at least two genuine difficulties appear. First, the existence and uniqueness of the solutions to the Fokker-Planck equation becomes a nontrivial matter. Also the (uniform) compactness of the solutions to the Hamilton-Jacobi equation does not follow from the usual techniques. At first, one could resort to semiconvexity/semiconcavity properties to bypass this issue - however, we notice the equation falls short in preserving those conditions.

To circumvent the first question, we resort to an argument in \cite{Por}, combined with a fixed-point strategy performed in an appropriate space of measures. As for the compactness for the solutions to the Hamilton-Jacobi, we reason through a nonlocal $L^p$-regularity theory, together with (compact) embedding results. We believe our techniques are flexible enough to produce information on a larger class of problems.

The remainder of this article is structured as follows: in Section \ref{Sec:mainthm} we present a brief outline of the proof of Theorem \ref{teo-main}. Section \ref{sec_existfp} establishes the well-posedness for the Fokker-Planck equation in \eqref{Eq.main-secondorder}, whereas in Section \ref{Sec:Lip} we produce a number of a priori estimates for the solutions of the Hamilton-Jacobi. Finally, a section reporting the proof of Theorem \ref{teo-main} closes the paper.


\section{Set up and outline of the proof}\label{Sec:mainthm}

In what follows, let us introduce the general lines along which we establish Theorem \ref{teo-main}. 
Let us start with the definition of (weak) solution used in the paper.

\begin{definition}[Weak solution]\label{def_solution}
A pair $(\phi,\rho)$ is a solution to \eqref{Eq.main-secondorder} if
\begin{enumerate}
\item $\phi\in\mathcal{C}(\td\times(0,T))$ satisfies the first equation in \eqref{Eq.main-secondorder} in the viscosity sense;
\item $\rho\in L^\infty(0,T;L^2(\td))$ satisfies the second equation in \eqref{Eq.main-secondorder} in the sense of distributions.
\end{enumerate}
\end{definition}

Throughout the paper, we denote by  $\mathcal{C}^{1,2}(\td\times[0,T])$ the space of functions defined over $\td\times[0,T]$ that are of class $\mathcal{C}^1$ with respect to time and class $\mathcal{C}^2$ with respect to space. Similarly, for any $\alpha\in(0,1),$ $\mathcal{C}^{\alpha,1+\alpha}(\td\times[0,T])$ stands for the functions that are $\alpha-$H\H older continuous in time, $\mathcal{C}^1$ with respect to space with $\alpha-$H\H older continuous  derivative.

We establish Theorem \ref{teo-main} by using a fixed-point argument. The argument starts by taking an element $\phi$ in $\mathcal{C}^{0,1}(\Tt^d\times[0,T])$. To such a function $\phi\in\mathcal{C}^{0,1}(\Tt^d\times[0,T])$, we can assign the unique weak solution $\rho\in L^\infty(0,T;L^2(\td))$ to the Fokker-Planck equation
\begin{equation}\label{Eq.rhophi}
	\frac{\partial \rho}{\partial t}=\div\left( (\nabla W\star \rho)\rho+\nabla \phi\,\rho\right)+\Delta \rho,
\end{equation}
equipped with initial condition $\rho(x,0)=\rho_0(x)$ in the $d$-dimensional torus.
 
Then, standard results in the literature ensure the existence of a unique solution, $\Phi\in \mathcal{C}^{1,2}(\Tt^d\times[0,T])$ to
\begin{equation}\label{eq:HJB}
	\begin{cases}
		-\Phi_t+\frac{|\nabla\Phi|^2}{2}+(\nabla W\star \rho)\cdot \nabla \Phi+\nabla W\star (\rho \nabla \phi)=\Delta \Phi +U(x,\rho)&\;\;\mbox{in}\;\;\Tt^d\times(0,T)\\
		\Phi(x,T)=\phi_T(x)&\;\;\mbox{in}\;\;\Tt^d\times(0,T);
	\end{cases}	
\end{equation}
see, for instance, \cite[Section 3.2]{cardaliaguet}. This procedure induces a mapping $\phi\,\to\,\rho\,\to\,\Phi$, which we denote
$$\mathcal{F}\,:\,\cap_{\alpha\in (0,1)}\mathcal{C}^{\alpha,1+\alpha}(\td\times[0,T])\,\longrightarrow\, \cap_{\alpha\in (0,1)}\mathcal{C}^{\alpha,1+\alpha}(\td\times[0,T]),$$
and define as
\[
\mathcal{F}(\phi):=\Phi.
\]

To prove the existence of a solution $(\phi,\rho)$ to problem \eqref{Eq.main-secondorder} is tantamount to verify that the mapping $\mathcal{F}$ has a fixed point. To that end, we start with the definition of an appropriate subset $\mathcal{K}_{\alpha}\subset \mathcal{C}^{\alpha,1+\alpha}(\Tt^d\times[0,T])$. For some constants $A$, $B$, and $C>0$ to be determined further, we define
\[
\mathcal{K}_{\alpha}=\left\{\phi\in\mathcal{C}^{\alpha,1+\alpha}(\td\times[0,T])\,|\, \left\|\Phi\right\|_{\mathcal{C}^{\alpha,1+\alpha}}\leq C \mbox{ and } \|\Phi(\cdot,t)\|_{Lip}\leq Ae^{B(T-t)}\right\}.
\]
 \begin{remark}
	We remark here that the set $\mathcal{K}_{\alpha}$ is nonempty, since for any smooth function $\phi$ appropriately rescaled $\phi(\lambda x, \lambda t)$ is in $\mathcal{K}_{\alpha}$. Furthermore, the set of smooth function $\mathcal{C}^{\infty}(\td\times[0,T])$ is dense in $\mathcal{K}_{\alpha}$, this can be proved by a simple mollification argument.
\end{remark}
 
The first step towards the proof of Theorem \ref{teo-main} is a result on the well-posedness for the Fokker-Planck equation in \eqref{Eq.main-secondorder}.

\begin{proposition}[Well-posedness for the Fokker-Planck equation]\label{prop_fpeu}
Suppose the assumptions {\rm\bf (A1)-(A3)} hold and $\phi\in\mathcal{K}_{\alpha}$ is fixed. Then, there exists a unique solution to \eqref{eq_fp1}.
\end{proposition}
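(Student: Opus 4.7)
The plan is to decouple the nonlinearity in the drift by a fixed-point argument applied to an auxiliary \emph{linear} Fokker-Planck equation, and then to establish uniqueness by an $L^2$-energy identity that treats the convolution term as a Lipschitz perturbation. Fix $\phi\in\Kk$, so that by the definition of $\Kk$ the gradient $\nabla\phi$ is bounded and Lipschitz in $x$ on $\td\times[0,T]$. For $R>0$ to be chosen, introduce the closed convex set
\[
\Mm_R=\left\{\mu\in\mathcal{C}([0,T];L^2(\td))\,:\,\mu\geq 0,\ \textstyle\int_{\td}\mu(\cdot,t)\,dx=1,\ \|\mu\|_{L^\infty(0,T;L^2)}\leq R\right\},
\]
and for each $\mu\in\Mm_R$ consider the linear problem
\[
\partial_t\rho=\Delta\rho+\operatorname{div}\bigl((\nabla W\star\mu+\nabla\phi)\rho\bigr),\qquad \rho(\cdot,0)=\rho_0.
\]
By \textbf{(A1)} the frozen drift $b_\mu:=\nabla W\star\mu+\nabla\phi$ is bounded and Lipschitz in $x$ uniformly in $t$ and in $\mu\in\Mm_R$, so classical linear parabolic theory yields a unique weak solution $\rho=T(\mu)$, nonnegative and mass-preserving.

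A standard $L^2$-energy estimate, obtained by multiplying the equation by $\rho$ and integrating by parts, gives
\[
\frac{d}{dt}\|\rho\|_{L^2}^2+2\|\nabla\rho\|_{L^2}^2\leq \|\operatorname{div} b_\mu\|_{L^\infty}\|\rho\|_{L^2}^2,
\]
and Gronwall produces $\|T(\mu)\|_{L^\infty(0,T;L^2)}\leq \|\rho_0\|_{L^2}\,e^{CT}$ with $C$ independent of $\mu$; choosing $R$ large enough ensures $T:\Mm_R\to\Mm_R$. Compactness of $T(\Mm_R)$ in $\mathcal{C}([0,T];L^2)$ follows from the additional $L^2(0,T;H^1)$ bound combined with an $H^{-1}$-bound on $\partial_t\rho$ read off the equation, through Aubin-Lions. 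Continuity of $T$ with respect to $\mathcal{C}([0,T];L^2)$-convergence of $\mu$ follows from the stability of linear parabolic equations under convergence of the drift, using Young's inequality and \textbf{(A1)} to pass to the limit in $\nabla W\star\mu$. Schauder's fixed-point theorem then furnishes a solution to the nonlinear Fokker-Planck equation.

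For uniqueness, let $\rho_1,\rho_2$ be two solutions with the same initial datum, set $\sigma=\rho_1-\rho_2$, and subtract the equations to obtain
\[
\partial_t\sigma=\Delta\sigma+\operatorname{div}\bigl((\nabla W\star\rho_1+\nabla\phi)\sigma\bigr)+\operatorname{div}\bigl((\nabla W\star\sigma)\rho_2\bigr).
\]
Testing against $\sigma$ and integrating by parts, the first bracket is handled in the usual way from boundedness of the drift. The nonlocal term is controlled using
\[
\|\nabla W\star\sigma\|_{L^\infty(\td)}\leq \|\nabla W\|_{L^\infty}\|\sigma\|_{L^1(\td)}\leq C\|\sigma\|_{L^2(\td)},
\]
combined with Cauchy-Schwarz, Young's inequality, and the uniform $L^2$-bound on $\rho_2$, which yields $\frac{d}{dt}\|\sigma\|_{L^2}^2\leq C(1+\|\rho_2\|_{L^2}^2)\|\sigma\|_{L^2}^2$; Gronwall forces $\sigma\equiv 0$. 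The main obstacle is precisely this last step: because $\nabla W\star\rho$ is nonlocal, neither the maximum principle nor the standard duality method for Fokker-Planck equations applies directly, and one must systematically exploit the $L^1\!\to\!L^\infty$ smoothing provided by $\nabla W\in L^\infty(\td)$, in the spirit of the argument of~\cite{Por}.
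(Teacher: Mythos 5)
Your route is genuinely different from the paper's. The paper runs Schauder in $\mathcal{C}([0,T],\mathcal{P}(\Tt^d))$ with the Wasserstein metric, proves invariance of its set $\mathcal{M}$ through the SDE representation of the frozen linear equation, and delegates uniqueness to \cite[Theorem 1.1]{Por}; you instead work in $L^2$-based spaces throughout and prove uniqueness of the \emph{nonlinear} equation directly via an energy identity for $\sigma=\rho_1-\rho_2$, controlling the nonlocal term through $\|\nabla W\star\sigma\|_{L^\infty}\leq\|\nabla W\|_{L^\infty}\|\sigma\|_{L^1}$. That uniqueness computation is correct within the class $L^\infty(0,T;L^2)\cap L^2(0,T;H^1)$ in which you construct solutions (and which matches Definition \ref{def_solution}), and it is in some respects more transparent than the paper's one-line appeal to \cite{Por}, since it handles the difference of the two nonlocal drifts explicitly.

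Two steps, however, need repair. First, your premise that $\nabla\phi$ is Lipschitz in $x$ for $\phi\in\Kk$ is wrong: membership in $\Kk$ only gives $\phi\in\mathcal{C}^{\alpha,1+\alpha}$, i.e.\ $\nabla\phi$ bounded and $\alpha$-H\"older, so $\Delta\phi$ need not exist and $\operatorname{div}b_\mu=\Delta W\star\mu+\Delta\phi$ need not lie in $L^\infty$. The displayed inequality $\frac{d}{dt}\|\rho\|_{L^2}^2+2\|\nabla\rho\|_{L^2}^2\leq \|\operatorname{div} b_\mu\|_{L^\infty}\|\rho\|_{L^2}^2$ is therefore not available as written. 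The fix is to keep the drift term in the form $-\int_{\Tt^d} b_\mu\,\rho\cdot\nabla\rho\,dx$ and apply Young's inequality, which only uses $\|b_\mu\|_{L^\infty}$ (this is exactly how the paper's proposition on propagation of $L^2$ regularity proceeds); likewise, solvability and uniqueness for the frozen linear problem with a merely H\"older drift should be quoted from results for divergence-form equations with bounded drift (e.g.\ \cite[Lemma 3.3]{cardaliaguet} and \cite{Por}) rather than from ``classical'' parabolic theory. Second, Aubin--Lions applied to the bounds in $L^2(0,T;H^1)$ and on $\partial_t\rho$ in $L^2(0,T;H^{-1})$ yields compactness in $L^2(\Tt^d\times(0,T))$ or in $\mathcal{C}([0,T];H^{-1})$, not in $\mathcal{C}([0,T];L^2)$ with the strong topology, since equicontinuity in $L^2$ does not follow from an $H^{-1}$ bound on the time derivative; you should therefore run Schauder with $\mathcal{M}_R$ regarded as a closed convex subset of $L^2(\Tt^d\times(0,T))$. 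Both repairs are routine, and with them your argument closes.
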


Once the existence and uniqueness for the Fokker-Planck equation \eqref{eq_fp1} is assured, we turn our attention to the compactness of the solutions to the Hamilton-Jacobi equation \eqref{eq:HJB}. We proceed with the following proposition.

\begin{proposition}
	\label{proposition.bounds}
	Let the assumptions {\rm\bf (A1)-(A3)} hold. Then, there exists a choice of constants $A$, $B$, $C$  depending only on $T$, $\phi_T$, $\rho_0$, $\|W\|_{W^{2,\infty}(\Tt^d)}$ and $U$, such that for any $\phi\in \mathcal{C}(\td\times[0,T])$, satisfying
	\[
	|\phi(x, t)-\phi(y,t)|\leq Ae^{B(T-t)}|x-y|,
	\]
	we have
	\begin{equation}\label{phiset}
	\|\Phi(\cdot,t)\|_{Lip}\leq Ae^{B(T-t)},\;\;\;\|\Phi\|_{\infty}\leq C,
	\nonumber
	\end{equation}
	for every solution $\Phi$ to \eqref{eq:HJB}.
\end{proposition}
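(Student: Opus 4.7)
The plan is to prove the two estimates separately: the $L^\infty$ bound follows from a maximum-principle argument on $\Phi$ itself, while the Lipschitz estimate is established through a Bernstein-type argument on $w := |\nabla \Phi|^2$. To simplify notation, abbreviate
\[
b(x,t) := (\nabla W \star \rho)(x,t), \qquad g(x,t) := \nabla W \star (\rho \nabla \phi)(x,t),
\]
so that \eqref{eq:HJB} reads $-\Phi_t + \tfrac12|\nabla\Phi|^2 + b\cdot\nabla\Phi + g = \Delta\Phi + U(\cdot,\rho)$. Because $\rho(\cdot,t)$ is a probability density by Proposition~\ref{prop_fpeu}, assumption \textbf{(A1)} delivers $\|b\|_\infty \leq \|\nabla W\|_\infty$, $\|\nabla b\|_\infty \leq \|D^2 W\|_\infty$, $\|g\|_\infty \leq \|\nabla W\|_\infty \,A e^{BT}$, and $\|\nabla g\|_\infty \leq \|D^2 W\|_\infty\, A e^{BT}$, while \textbf{(A2)} gives uniform $\mathcal{C}^2$ control on $U(\cdot,\rho)$. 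These are the only inputs required from the hypotheses.

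For the $L^\infty$ estimate, evaluate the equation at a maximum point $x_0$ of $\Phi(\cdot,t)$: there $\nabla\Phi(x_0,t)=0$ and $\Delta\Phi(x_0,t)\leq 0$, hence \eqref{eq:HJB} reduces to $\Phi_t(x_0, t) = g - \Delta\Phi - U \geq -\|U\|_\infty - \|g\|_\infty =: -C_0$. The envelope theorem then gives $\frac{d}{dt}\max_x \Phi(\cdot, t) \geq -C_0$ in the a.e. sense, and integrating up to $T$ yields $\max_x \Phi(x,t) \leq \|\phi_T\|_\infty + C_0 T$. A symmetric argument at a minimizer supplies the matching lower bound, so $\|\Phi\|_\infty \leq \|\phi_T\|_\infty + C_0 T =: C$ with $C_0$, and hence $C$, depending only on the data listed in the statement.

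The Lipschitz bound is the core of the argument. Set $\tau := T - t$. Differentiating \eqref{eq:HJB} in $x_k$, multiplying by $\Phi_k$, summing over $k$, and using the identity $\Phi_k \Delta\Phi_k = \tfrac12\Delta w - |D^2\Phi|^2$ yields
\[
\partial_\tau w - \Delta w + (\nabla\Phi + b)\cdot\nabla w + 2|D^2\Phi|^2 = -2\,\nabla\Phi^\top(\nabla b)\nabla\Phi - 2\,\nabla g\cdot\nabla\Phi + 2\,\nabla U\cdot\nabla\Phi.
\]
Dropping the nonnegative Hessian contribution and applying Young's inequality $2|\nabla\Phi|\cdot|h| \leq w + |h|^2$ to the $\nabla g$ and $\nabla U$ terms produces
\[
\partial_\tau w - \Delta w + (\nabla\Phi + b)\cdot\nabla w \leq c_1\, w + \|D^2 W\|_\infty^2\,A^2 e^{2B\tau} + \|\nabla U\|_\infty^2,
\]
with $c_1$ depending only on $\|W\|_{W^{2,\infty}}$. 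The parabolic maximum principle on the torus then delivers $\sup_x w(x,\tau) \leq v(\tau)$, where $v$ solves the scalar ODE $v' = c_1 v + \|D^2 W\|_\infty^2 A^2 e^{2B\tau} + \|\nabla U\|_\infty^2$ with $v(0) = \|\nabla\phi_T\|_\infty^2$. Comparing $v$ with the ansatz $\bar v(\tau) = A^2 e^{2B\tau}$, a direct calculation shows that if $B$ is chosen large enough (depending on $c_1$ and $\|D^2 W\|_\infty$) and $A$ is chosen large enough (relative to $\|\nabla\phi_T\|_\infty$ and $\|\nabla U\|_\infty$), then $\bar v$ is a supersolution with $\bar v(0)\geq v(0)$, so $v\leq\bar v$ on $[0,T]$. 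This gives $\|\nabla\Phi(\cdot,t)\|_\infty \leq A e^{B(T-t)}$ and closes the bootstrap.

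The main obstacle is the term $\nabla g \cdot \nabla\Phi$: its spatial gradient carries a factor $Ae^{BT}$ inherited from the \emph{input} Lipschitz constant of $\phi$, which at first sight seems to preclude absorption back into the envelope $Ae^{B(T-t)}$. The saving observation is that the resulting forcing in the $w$-equation is homogeneous of degree two in $A$, exactly matching the scaling of $\bar v = A^2 e^{2B\tau}$, so that $B$ can be inflated to dominate the constants without altering the $A$-scaling; the equation thus self-absorbs the coupling. A secondary technical point is that the Bernstein computation is formal under only $\mathcal{C}^{1,2}$ regularity of $\Phi$; this is handled by mollifying $\rho$, $\phi$, and $U$, performing the calculation on the resulting classical solutions (smooth by standard parabolic theory), and passing to the limit using the fact that all estimates are uniform in the mollification parameter.
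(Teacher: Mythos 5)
Your proof is correct, but it follows a genuinely different route from the paper. The paper runs Evans' nonlinear adjoint method: it introduces the two adjoint flows $\eta$ and $\zeta$ (with drifts $\nabla W\star\rho+\nabla\Phi$ and $\nabla W\star\rho$, respectively, both started from $\delta_{x_0}$ at time $t_0$), integrates the Hamilton--Jacobi equation against each to get two-sided bounds on $\Phi(x_0,t_0)$ and the key energy estimate on $\int_{t_0}^T\int_{\td}\tfrac{|\nabla\Phi|^2}{2}\eta$, and then integrates the $\xi$-differentiated equation against $\eta$ to obtain $\|\nabla\Phi(\cdot,t)\|_\infty\leq A+B\int_t^T\|\nabla\phi(\cdot,s)\|_\infty\,ds$, closing the loop by the same exponential-envelope observation you use. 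Your Bernstein argument on $w=|\nabla\Phi|^2$ reaches the same differential inequality directly via the parabolic maximum principle, and your computation is sound: the crucial structural point in both proofs is that the nonlocal forcing $\nabla W\star(\rho\nabla\phi)$ contributes at time $t$ only a term of size $\|D^2W\|_\infty Ae^{B(T-t)}$ (because $\rho(\cdot,t)$ is a probability measure), so the envelope $Ae^{B(T-t)}$ self-propagates once $B$ is large. What each approach buys: the adjoint method needs only one spatial derivative of the equation tested against measures, so it sidesteps the third-derivative issue entirely and additionally produces the estimate on $\int\int|\nabla\Phi|^2\eta$ along optimal trajectories, which the paper reuses; your Bernstein route is more elementary and decouples the $L^\infty$ and Lipschitz bounds cleanly, at the cost of the mollification step you correctly flag (the coefficients $b$, $g$ are only Lipschitz in $x$ under {\bf (A1)}, so $\Phi$ need not be $\mathcal{C}^3$ and the identity for $\Delta w$ is formal; smoothing $\rho$ and $\phi$ in space preserves all the constants, so the limit passage is legitimate). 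One cosmetic remark: your constant $C$ for $\|\Phi\|_\infty$ depends on $A$, $B$ and $T$ through $\|g\|_\infty$; this is consistent with the statement and with the paper, where $C_1$ likewise depends on $A$ and $B$, since those are fixed first in terms of the data.
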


From now on, the set $\mathcal{K}_{\alpha}$ is defined with the constants $A$ and $B$ given by Proposition \ref{phiset}. We now turn to fix $C$. The $L^p$-regularity theory for the Hamilton-Jacobi equation is pivotal in proving uniform bounds for the solutions in appropriate H\"older spaces. This is the content of the next proposition.

\begin{proposition}\label{proposition_c1alpha}
Let $\Phi$ be a solution to \eqref{eq:HJB} corresponding to $\phi\in \mathcal{K}_{\alpha}$. Suppose the assumptions {\rm\bf (A1)-(A3)} hold true. Then, $\Phi\in\mathcal{C}^{\alpha,1+\alpha}(\mathbb{T}^d\times[0,T])$ for some $\alpha\in(0,1)$. Moreover, there exists $C>0$ depending solely on $T$, $\phi_T$, $\rho_0$, $\|W\|_{\mathcal{C}^2(\Tt^d)}$ and $U$, such that
\[
	\left\|\Phi\right\|_{\mathcal{C}^{\alpha,1+\alpha}(\mathbb{T}^d\times[0,T])}\,\leq\,C.
\]
\end{proposition}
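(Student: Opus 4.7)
The plan is to treat \eqref{eq:HJB} as a linear backward heat equation
\[
-\Phi_t-\Delta\Phi = F_\phi, \qquad \Phi(\cdot,T)=\phi_T,
\]
with source
\[
F_\phi := -\tfrac{1}{2}|\nabla\Phi|^2 - (\nabla W\star\rho)\cdot\nabla\Phi - \nabla W\star(\rho\nabla\phi) + U(x,\rho),
\]
and then to invoke the classical parabolic $L^p$-theory followed by a parabolic Sobolev embedding to upgrade to $\mathcal{C}^{\alpha,1+\alpha}$.

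The first step is to show that $\|F_\phi\|_{L^\infty(\td\times[0,T])}$ is bounded by a constant that depends only on $T$, $\phi_T$, $\rho_0$, $\|W\|_{W^{2,\infty}}$ and $U$. Since $\phi\in\Kk$ satisfies $\|\nabla\phi\|_\infty\leq A e^{BT}$, Proposition \ref{proposition.bounds} provides the same bound for $\nabla\Phi$, which controls the quadratic term. The Fokker-Planck equation preserves nonnegativity and total mass on the torus, so $\|\rho(\cdot,t)\|_{L^1}=1$, and Young's inequality $\|g\star h\|_\infty\leq\|g\|_\infty\|h\|_{L^1}$ applied with $g=\nabla W\in L^\infty$ (by (A1)) and $h\in\{\rho,\rho\nabla\phi\}$ bounds both convolution terms; then multiplication by the bounded $\nabla\Phi$ handles the drift term. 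Finally, $\|U(\cdot,\rho)\|_\infty\leq C$ by (A2). Altogether, $F_\phi$ is bounded in $L^\infty$ uniformly in $\phi\in\Kk$.

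With this in hand, the classical parabolic Calder\'on-Zygmund theory (e.g.\ Ladyzhenskaya-Solonnikov-Ural'tseva) delivers $\Phi\in W^{1,2;p}(\td\times[0,T])$ for every $p\in(1,\infty)$, together with the estimate
\[
\|\Phi\|_{W^{1,2;p}}\leq C_p\bigl(\|F_\phi\|_{L^p}+\|\phi_T\|_{W^{2-2/p,p}}\bigr),
\]
whose right-hand side is uniformly controlled thanks to (A2) and the previous step. Choosing $p>d+2$, the parabolic Sobolev embedding $W^{1,2;p}\hookrightarrow \mathcal{C}^{\alpha,1+\alpha}$ with $\alpha=1-\tfrac{d+2}{p}$ yields the asserted H\"older bound, with $C$ depending only on the listed quantities.

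The main subtlety I anticipate lies in making every estimate genuinely independent of the particular $\phi\in\Kk$. The only way $\phi$ enters $F_\phi$ beyond what is already bounded by the definition of $\Kk$ is through the nonlocal coupling $\nabla W\star(\rho\nabla\phi)$, and this is where I would carefully pair the integrable factor $\rho\nabla\phi\in L^1$ (bounded via mass conservation and $\|\nabla\phi\|_\infty\leq Ae^{BT}$) with the bounded factor $\nabla W\in L^\infty$, so that no higher-order regularity of $\phi$ is ever invoked. In other words, the convolution smooths away the limited regularity of the unknown in the fixed-point iteration, and the potentially problematic nonlocal term turns out to be tame from the point of view of the regularity theory.
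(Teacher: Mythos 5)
Your proof is correct and follows essentially the same route as the paper: rewrite \eqref{eq:HJB} as $-\Phi_t-\Delta\Phi=f$ with $f\in L^\infty$ uniformly bounded (using the Lipschitz bound from Proposition \ref{proposition.bounds}, the mass of $\rho$, Young's inequality for the convolutions, and {\bf (A2)}), then apply parabolic $L^p$-theory and Morrey's embedding. You merely spell out the $L^\infty$ bound on the source term in more detail than the paper does; no substantive difference.
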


From now on, the set $\mathcal{K}_{\alpha}$ is fixed with the definition of $C$ given in the previous proposition. We detail next the proof of Proposition \ref{prop_fpeu}.


\section{Well-posedness for the Fokker-Planck equation}\label{sec_existfp}

In this section, we prove Proposition \ref{prop_fpeu} on the existence and uniqueness of solutions to
\begin{equation}\label{por1}
		\begin{cases}
			\rho_t\,=\,\div\left[\left(\left(\nabla W\star\rho\right)\,+\,\nabla \phi\right)\rho\right]\,+\,\Delta\rho&\;\;\;\;\;\mbox{in}\;\;\;\;\;\mathbb{T}^d\times[0,T]\\
			\rho(x,0)\,=\,\rho_0(x)&\;\;\;\;\;\mbox{in}\;\;\;\;\;\mathbb{T}^d,
		\end{cases}
\end{equation}
provided $W$ and $\phi$ are well-prepared. We reason through a fixed-point argument; the next proposition details the functional space we work. Before proceeding let us recall the 1-Wasserstein metric on the space of probability measures $\mathcal{P}(\Tt^d)$:
\begin{definition}[Wasserstein metric]
	The $1$-Wasserstein metric between probability measures $\mu, \nu\in\mathcal{P}(\Tt^d)$ is given by
	\[
	d_1(\mu,\,\nu):=\inf_{\pi\in\Gamma(\mu,\,\nu)}\int_{\Tt^d\times\Tt^d}|x-y|d\pi(x,y),
	\]
where $ \Gamma (\mu ,\nu )$ is the collection of all measures on $\Tt^d\times\Tt^d$ with marginals $ \mu $  and $\nu .$
\end{definition}
Recall that the $d_1$ distance in bounded sets is the same as the bounded Lipschitz distance defined via duality with respect to Lipschitz functions, see \cite{Vil}. 
\begin{proposition}\label{prop_fp1}Define the function
\[
	N(t):=\int_{\mathbb{T}^d}\left|(\nabla W\star\mu)+\nabla \phi\right|^2d\mu(x,t).
\]
Take $C>0$ to be determined later. Set 
\[
	\mathcal{M}:=\left\lbrace\mu\in\mathcal{C}([0,T],\mathcal{P}(\Tt^d)):d_1(\mu(s),\mu(t))\leq C|t-s|^\frac{1}{2},\left\|N\right\|_{L^\infty([0,T])}\leq C \right\rbrace.
\]
Then, $\mathcal{M}$ is a convex and compact subset of $\mathcal{C}([0,T],\mathcal{P}(\Tt^d))$.
\end{proposition}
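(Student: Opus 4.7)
The plan is to apply the Arzelà--Ascoli theorem in $\mathcal{C}([0,T], \Pp(\td))$ equipped with the uniform $d_1$-topology, exploiting that $(\Pp(\td), d_1)$ is a \emph{compact} metric space: since $\td$ is compact, probability measures on it form a weakly compact set, and (as recalled just before the statement) $d_1$ metrizes weak convergence on bounded sets.

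For convexity, take $\mu = \lambda\mu_1+(1-\lambda)\mu_2$ with $\mu_i\in\Mm$ and $\lambda\in[0,1]$. The Hölder-$1/2$ bound on $d_1(\mu(s),\mu(t))$ is preserved by joint convexity of $d_1$: gluing optimal transport plans $\pi_i$ between $\mu_i(s)$ and $\mu_i(t)$, the plan $\lambda\pi_1+(1-\lambda)\pi_2$ has the correct marginals and cost at most $\lambda d_1(\mu_1(s),\mu_1(t)) + (1-\lambda) d_1(\mu_2(s),\mu_2(t))\leq C|t-s|^{1/2}$. For the bound on $N$, I will exploit the \emph{pointwise} estimate
\[
|(\nabla W\star\nu)(x)+\nabla\phi(x,t)|\,\leq\,\|\nabla W\|_\infty+\|\nabla\phi\|_\infty\,=:\,M,
\]
valid for every probability measure $\nu$ on $\td$ and every $(x,t)$, thanks to (A1) and the Lipschitz control $\|\nabla\phi\|_\infty\leq Ae^{BT}$ built into the definition of $\Kk$. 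Integrating against $\mu(t)$ gives $N(t)\leq M^2$ for \emph{any} curve in $\mathcal{C}([0,T],\Pp(\td))$, so choosing $C\geq M^2$ (which the proposition permits, since $C$ is to be fixed later) makes the second defining condition of $\Mm$ automatic, in particular for convex combinations.

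For compactness I invoke Arzelà--Ascoli: the uniform $1/2$-Hölder control provides equicontinuity, while pointwise precompactness is free because $(\Pp(\td),d_1)$ is compact. Hence $\Mm$ is precompact in $\mathcal{C}([0,T], \Pp(\td))$. Closedness along a uniformly $d_1$-convergent sequence $\mu_n\to\mu$ with $\mu_n\in\Mm$ is immediate: the Hölder bound passes to the limit by the triangle inequality for $d_1$, and the $N$-bound follows either from the universal estimate $N\leq M^2$ above, or directly, since $\nabla W\in L^\infty$ gives $\nabla W\star\mu_n(t)\to\nabla W\star\mu(t)$ uniformly on $\td$, whence the integrands in $N$ converge uniformly while $\mu_n(t)\rightharpoonup\mu(t)$.

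The only real subtlety is the calibration of $C$: it must be taken larger than $M^2$ so that the $N$-constraint is compatible with the pointwise field bound, but it also needs to accommodate the equicontinuity estimates that will arise when $\mathcal{M}$ is shown to be invariant under the Fokker--Planck flow in the fixed-point argument. Once $C$ is fixed accordingly, both defining conditions of $\Mm$ are stable under convex combinations and under uniform $d_1$-limits, and the proposition follows.
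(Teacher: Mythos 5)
Your proof is correct and takes essentially the same route as the paper, which omits the argument entirely by deferring to \cite[Lemma 5.7]{cardaliaguet} --- precisely the Arzel\`a--Ascoli argument in $\mathcal{C}([0,T],\mathcal{P}(\Tt^d))$ based on the compactness of $(\mathcal{P}(\Tt^d),d_1)$ that you carry out. Your observation that the constraint on $N$ becomes automatic once $C\geq\left(\|\nabla W\|_\infty+\|\nabla\phi\|_\infty\right)^2$ is a useful clarification, since it also disposes of the otherwise non-obvious stability of that nonlinear constraint under convex combinations and limits.
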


The proof of the Proposition \ref{prop_fp1} follows along the same lines as in \cite[Lemma 5.7]{cardaliaguet}, except for minor modifications, and is omitted here.

In what follows, we define a mapping $\mathcal{T}:\mathcal{C}([0,T],\mathcal{P}(\Tt^d))\to\mathcal{C}([0,T],\mathcal{P}(\Tt^d))$. Let $\phi\in\mathcal{K}_{\alpha}$ be fixed and take $\rho^0\in\mathcal{M}$. Solve
\begin{equation}\label{eq_fp1}
	\rho_t\,=\,\div\left[\left(\left(\nabla W\star\rho^0\right)\,+\,\nabla \phi\right)\rho\right]\,+\,\Delta\rho\;\;\;\;\;\mbox{in}\;\;\;\;\;\mathbb{T}^d\times[0,T]
\end{equation}
equipped with initial condition $\rho(x,0)=\rho_0$. By \cite[Lemma 3.3]{cardaliaguet}, we know that there exists a solution $\rho^1$ to \eqref{eq_fp1}. Notice, that 
\[
	\left|\left(\nabla W\star\rho^0\right)\,+\,\nabla \phi\right|^2\rho^1\,\in\,L^1(\mathbb{T}^d\times(0,T)).
\]
In fact, 
\[
	\int_0^T\int_{\mathbb{T}^d}\left|\left(\nabla W\star\rho^0\right)\,+\,\nabla \phi\right|^2d\rho^1(x,t)\,\leq\,C(W,\phi,T).
\]
Therefore, it follows from \cite[Theorem 1.1]{Por} that \eqref{eq_fp1} has at most one solution, which is precisely $\rho^1$. As a consequence, we define
\begin{equation}\label{eq_fp2}
	\mathcal{T}(\rho^0)\,:=\,\rho^1.
\end{equation}
Next we examine properties of the mapping $\mathcal{T}$ which are related to the fixed-point arguments presented further in this section.

\begin{proposition}\label{prop_fp2}
Let $\mathcal{T}:\mathcal{C}([0,T],\mathcal{P}(\Tt^d))\to\mathcal{C}([0,T],\mathcal{P}(\Tt^d))$ be defined as in \eqref{eq_fp2}. Then, $\mathcal{T}$ maps $\mathcal{M}$ into itself. In addition, $\mathcal{T}$ is a continuous mapping when restricted to the set $\mathcal{M}$.
\end{proposition}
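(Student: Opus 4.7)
The plan is to verify that the two defining conditions of $\Mm$ are inherited by $\rho^1:=\mathcal{T}(\rho^0)$ (invariance), and then to upgrade this to sequential continuity via the compactness of $\Mm$ combined with the uniqueness result \cite[Theorem 1.1]{Por} already used in the very definition of $\mathcal{T}$.

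\textbf{Invariance.} I would introduce the frozen linear drift
\[
b_{\rho^0}(x,t) := (\nabla W\star\rho^0)(x,t) + \nabla\phi(x,t).
\]
Because $\rho^0(\cdot,t)\in\Pp(\td)$ is a probability and $W\in W^{2,\infty}$, we have $\|\nabla W\star\rho^0\|_\infty \le \|\nabla W\|_\infty$; together with $\|\nabla\phi\|_\infty\le Ae^{BT}$ inherited from $\phi\in\Kk$, this gives a universal bound $\|b_{\rho^0}\|_\infty\le K$ depending only on $\|W\|_{W^{2,\infty}}$, $A$, $B$ and $T$. With such a bounded drift, the standard $d_1$-Hölder-in-time estimate for the linear Fokker--Planck equation (precisely the argument of \cite[Lemma 3.3]{cardaliaguet}) yields $d_1(\rho^1(s),\rho^1(t))\le C_1|t-s|^{1/2}$. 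The same reasoning applied to $\rho^1$ itself gives
\[
N(t) = \int_{\td}|(\nabla W\star\rho^1)+\nabla\phi|^2\,d\rho^1 \le (\|\nabla W\|_\infty + Ae^{BT})^2,
\]
since $\rho^1(\cdot,t)$ is again a probability measure. Enlarging $C$ in the definition of $\Mm$ if necessary, both conditions are satisfied, so $\mathcal{T}(\Mm)\subset\Mm$.

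\textbf{Continuity.} Given $\rho^0_n\to\rho^0$ in $\mathcal{C}([0,T];\Pp(\td))$ within $\Mm$, the Lipschitz character of $\nabla W$ combined with Kantorovich duality yields
\[
\|b_{\rho^0_n}(\cdot,t)-b_{\rho^0}(\cdot,t)\|_\infty \le \|D^2W\|_\infty\, d_1(\rho^0_n(t),\rho^0(t)),
\]
so the drifts converge uniformly on $\td\times[0,T]$. The image sequence $\rho^1_n:=\mathcal{T}(\rho^0_n)$ lies in the compact set $\Mm$ by the invariance step, so some subsequence $\rho^1_{n_k}$ converges to a curve $\rho^*\in\mathcal{C}([0,T];\Pp(\td))$. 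Testing the weak formulation of \eqref{eq_fp1} against a smooth function $\zeta$ vanishing at $t=T$, the uniform convergence of the drifts together with the narrow convergence of $\rho^1_{n_k}(\cdot,t)$ lets me pass to the limit in every term, identifying $\rho^*$ as a weak solution of the linear Fokker--Planck equation with drift $b_{\rho^0}$ and initial datum $\rho_0$. By the uniqueness from \cite[Theorem 1.1]{Por}, $\rho^* = \mathcal{T}(\rho^0)$. Since every subsequence of $\{\rho^1_n\}$ admits a further subsequence converging to the same limit, the full sequence converges.

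\textbf{Main obstacle.} The delicate step is justifying the passage to the limit in the drift term of the weak formulation. The uniform convergence $b_{\rho^0_n}\to b_{\rho^0}$ is what makes this work, and it leverages the $W^{2,\infty}$ regularity of $W$ in an essential way: without it, one would only have $d_1$-to-$d_1$ continuity of the convolution, which would not pair cleanly against narrowly convergent probability measures. The uniqueness result of Porretta, applicable thanks to the bound $|b_{\rho^0}|^2\rho^1\in L^1(\td\times(0,T))$ noted in the construction of $\mathcal{T}$, is then indispensable for promoting subsequential convergence to convergence of the entire sequence.
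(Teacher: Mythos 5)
Your proof is correct and follows essentially the same route as the paper: the invariance of $\mathcal{M}$ is obtained from the uniform bound $\|\nabla W\star\rho^0\|_\infty+\|\nabla\phi\|_\infty\leq C$ exactly as in the paper (which derives the $|t-s|^{1/2}$ estimate via the associated SDE rather than citing the PDE-level estimate, an immaterial difference), and the continuity rests on the uniqueness theorem of \cite{Por}. For the continuity step you actually supply the compactness-plus-identification-of-the-limit argument that the paper only cites as ``standard stability theory,'' which is a welcome amplification rather than a departure.
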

\begin{proof}
First, we verify the inclusion $\mathcal{T}(\mathcal{M})\subset\mathcal{M}$. Let $\mu\in\mathcal{M}$ and $\sigma:=\mathcal{T}(\mu)$. We have
\[
	d_1(\sigma(t),\sigma(s))\leq \mathbb{E}(X_t,X_s),
\]
where 
\begin{equation}\label{eq_fp3}
	\begin{cases}
		dX_t\,=\,\left[(\nabla W\star\mu)\,+\,\nabla \phi\right]dt\,+\,\sqrt{2}IdW_t\\
		X(0)\,=\,X_0,
	\end{cases}
\end{equation}
and $W_t$ is a $d$-dimensional Brownian motion. Because the vector field $(\nabla W\star\mu)\,+\,\nabla \phi$ is H\"older continuous with respect to space, there exists a unique solution to \eqref{eq_fp3}; see, for example, \cite{chadru1}. Hence, using the Lipschitz continuity of $W$ and $\phi$, we obtain
\begin{align*}
	d_1(\sigma(t),\sigma(s))\,&\leq\,\mathbb{E}\left[\int_s^t\left|(\nabla W\star\mu)\,+\,\nabla \phi\right|d\tau\,+\,\sqrt{2}\left|W_t\,-\,W_s\right|\right]\\
	&\leq\,C_1\left|t\,-\,s\right|^\frac{1}{2},
\end{align*}
with $C_1=C_1(W,\phi,T)$. Moreover, once more from the Lipschitz continuity of $W$ and $\phi$, we infer
\[
	\int_0^T\int_{\mathbb{T}^d}\left|\nabla W\star\sigma\,+\,\nabla \phi\right|^2d\sigma(x,t)\,\leq\,C\int_0^T\int_{\mathbb{T}^d}\sigma(x,t)dxdt\,\leq\,C_2,
\]
where $C_2=C_2(W,\phi,T)$. 
Finally, by choosing the constant $C>0$ in the definition of $\mathcal{M}$ as
$
	C\,:=\,\max\left\lbrace C_1,\,C_2\right\rbrace,
$
we have $\mathcal{T}(\mathcal{M})\subset\mathcal{M}$.

It remains to prove that $\mathcal{T}$ is continuous. However, it follows from standard results in stability theory for the Fokker-Planck equation in the presence of H\"older-continuous drift terms \cite{Por}. 
\end{proof}

\begin{remark}
We notice that the constant $C>0$ in the definition of $\mathcal{M}$ depends solely on $W$ and $\phi$.
\end{remark}

We close this section with the proof of Proposition \ref{prop_fpeu}.

\begin{proof}[Proof of Proposition \ref{prop_fpeu}]
We start by noticing that $\mathcal{T}$ has a fixed point. In fact, Proposition \ref{prop_fp1} ensures that $\mathcal{M}$ is a convex and compact subset of $\mathcal{C}([0,T],\mathcal{P}_1)$. From Proposition \ref{prop_fp2} we infer that $\mathcal{T}(\mathcal{M})\subset\mathcal{M}$ and that this mapping is continuous. Therefore, the Schauder's Fixed Point Theorem yields the existence of a fixed point for $\mathcal{T}$. The uniqueness is ensured by a straightforward application of \cite[Theorem 1.1]{Por} and the proposition is established.
\end{proof}

\begin{proposition}[Propagation of $L^2$ regularity]
	Let the  assumptions of Proposition \ref{prop_fpeu} hold, 
	then $\rho \in L^{\infty}( 0,T; L^2 (\Tt^d))$.
\end{proposition}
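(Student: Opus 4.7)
The plan is to run a standard $L^2$ energy estimate, exploiting the fact that under the standing assumptions the drift is a bounded vector field. Formally testing the Fokker-Planck equation against $\rho$ and integrating by parts on $\Tt^d$ yields
\[
\frac{1}{2}\frac{d}{dt}\|\rho(\cdot,t)\|_{L^2}^2 + \int_{\Tt^d}|\nabla\rho|^2\,dx = -\int_{\Tt^d}\rho\, b\cdot\nabla\rho\,dx,
\]
with $b := (\nabla W\star \rho) + \nabla \phi$. Since $W\in W^{2,\infty}(\Tt^d)$ and $\phi\in\Kk$, one has $\|b\|_{L^\infty(\Tt^d\times[0,T])}\leq M$ for a constant $M$ depending only on $\|W\|_{W^{2,\infty}}$ and on $A,B,T$. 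Applying Young's inequality to split the right-hand side and absorbing the resulting gradient term leads to
\[
\frac{d}{dt}\|\rho(\cdot,t)\|_{L^2}^2 + \int_{\Tt^d}|\nabla\rho|^2\,dx \,\leq\, M^2\,\|\rho(\cdot,t)\|_{L^2}^2,
\]
so Gronwall's lemma produces $\|\rho(\cdot,t)\|_{L^2}^2 \leq e^{M^2 T}\|\rho_0\|_{L^2}^2$ on $[0,T]$, which is precisely the claimed $L^\infty(0,T;L^2(\Tt^d))$ bound.

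To turn the formal computation into a rigorous one, I would regularize the data by taking $\rho_0^\epsi\to\rho_0$ in $L^2$ with $\rho_0^\epsi\in \mathcal{C}^\infty(\Tt^d)$ and mollifying $\phi$ to $\phi^\epsi\in \mathcal{C}^\infty(\Tt^d\times[0,T])$, with Lipschitz constants preserved in the limit. Applying Proposition \ref{prop_fpeu} to each pair $(\rho_0^\epsi, \phi^\epsi)$ (with parabolic bootstrap for smoothness) yields a classical solution $\rho^\epsi$ of the corresponding Fokker-Planck equation, for which the energy identity above is valid in the strong sense. Since $M$ depends only on $\|W\|_{W^{2,\infty}}$ and on the uniform Lipschitz bound of the $\phi^\epsi$, the estimate is uniform in $\epsi$. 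Passing to the limit using the stability from \cite[Theorem 1.1]{Por} together with the uniqueness in Proposition \ref{prop_fpeu} identifies the limit with $\rho$, and lower semicontinuity of the $L^2$-norm preserves the bound.

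The most delicate part of the argument is handling the nonlocal drift through the limit, since a priori the family $\{\rho^\epsi\}$ is only known to converge in $\mathcal{C}([0,T];\Pp(\Tt^d))$. The unblocking fact is that $\nabla W\in W^{1,\infty}(\Tt^d)$ makes $\mu\mapsto \nabla W\star \mu$ a Lipschitz map from $(\Pp(\Tt^d),d_1)$ into $L^\infty(\Tt^d)$, with Lipschitz constant $\|D^2 W\|_{L^\infty}$; this yields uniform convergence of $\nabla W\star \rho^\epsi$ to $\nabla W\star \rho$ in $(t,x)$, and combined with weak-$\ast$ compactness in $L^\infty(0,T;L^2(\Tt^d))$ for the bounded family $\{\rho^\epsi\}$ it is enough to pass to the limit in the weak formulation of the equation and in the energy estimate itself.
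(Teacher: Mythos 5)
Your formal energy estimate is exactly the one the paper runs, but your route to making it rigorous is genuinely different. The paper mollifies the \emph{solution}: it shows that $\rho_\epsi:=\varphi_\epsi\star\rho$ is a weak (hence, by heat-equation regularity, classical) solution of the \emph{linear} equation $(\rho_\epsi)_t=\div\left[\varphi_\epsi\star(b\rho)\right]+\Delta\rho_\epsi$ with bounded right-hand side, performs the $L^2$ computation on $\rho_\epsi$ using the pointwise bound $|\varphi_\epsi\star(b\rho)|\leq\|b\|_\infty\,(\varphi_\epsi\star\rho)$, and closes with Gronwall and $\|\varphi_\epsi\star\rho_0\|_{L^2}\leq\|\rho_0\|_{L^2}$; letting $\epsi\to 0$ then upgrades the measure $\rho(\cdot,t)$ to an $L^2$ function with the same bound, and no stability analysis of the nonlinear problem is ever needed. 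You instead mollify the \emph{data} and re-solve the nonlinear, nonlocal problem for each $\epsi$. This works, but it carries two burdens that the paper's trick avoids. First, even with smooth $(\rho_0^\epsi,\phi^\epsi)$ the coefficient $\nabla W\star\rho^\epsi$ is only Lipschitz in space (since $W\in W^{2,\infty}$), so "parabolic bootstrap to a classical solution" is an overstatement; what you actually get is $W^{2,1}_p$-type regularity, which is enough to justify the energy identity but should be stated as such. Second, identifying $\lim_\epsi\rho^\epsi$ with $\rho$ is a stability statement for the \emph{nonlinear} fixed point, whereas \cite[Theorem~1.1]{Por} is a uniqueness result for the linear equation with a prescribed drift; you do supply the missing ingredients (compactness of the family in $\mathcal{C}([0,T],\Pp(\Tt^d))$, the $d_1$-Lipschitz continuity of $\mu\mapsto\nabla W\star\mu$ granted by $\|D^2W\|_{L^\infty}$, passage to the limit in the weak formulation, then linear uniqueness for the limit drift), but this chain is the most delicate part of your argument and deserves to be written out rather than compressed into a citation. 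In short: both proofs rest on the same a priori estimate; the paper's regularization is the more economical one because it linearizes the problem around the single, already-constructed solution, while yours is more self-contained about where the approximating solutions come from at the price of a nonlinear stability step.
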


\begin{proof}
	We take a standard  radially symmetric mollifier $\varphi_{\epsi}\in C^{\infty}(\Tt^d)$ and a smooth function $\psi\in C^{\infty}_c([0,T)\times \Tt^d)$, and  consider $\varphi_{\epsi}\star\psi(t, \cdot)$ as test function for the weak solution  $\rho$:
	\[
	\int_0^T\int_{\Tt^d}(\varphi_{\epsi}\star\psi_t\,-\,\Delta \varphi_{\epsi}\star\psi\,+\, b D\varphi_{\epsi}\star\psi)\rho\,dxdt\,=\,
	\int_{\Tt^d} \rho_0 \varphi_{\epsi}\star\psi(0) dx,
	\]
	where $b(x,t):=\nabla W\star\rho\,+\,\nabla \phi$. Using the identity
	\[
	\int_{\Tt^d}(\varphi_{\epsi}\star g)(x)\, f(x)\,dx =\int_{\Tt^d}g(x)\, (\varphi_{\epsi}\star f)(x) \,dx,
	\]
	we get
	\[
	\int_0^T\int_{\Tt^d}(\psi_t\,-\,\Delta \psi) (\varphi_{\epsi}\star\rho) \,+\, D\psi  (\varphi_{\epsi}\star (b\rho)) \,dxdt\,=\,
	\int_{\Tt^d} \varphi_{\epsi}\star\psi\, \rho_0 dx\,,
	\]
	implying $\varphi_{\epsi}\star\rho$ is  weak solution to
	\[
	\begin{cases}
	(\varphi_{\epsi}\star\rho)_t\,=\,\div\left[\varphi_{\epsi}\star (b\rho))\right]\,+\,\Delta	(\varphi_{\epsi}\star\rho)&\;\;\;\;\;\mbox{in}\;\;\;\;\;\mathbb{T}^d\times[0,T]\\
	\rho(x,0)\,=\,	\varphi_{\epsi}\star\rho_0(x)&\;\;\;\;\;\mbox{in}\;\;\;\;\;\mathbb{T}^d.
	\end{cases}
	\]
	Since $t\mapsto \rho\, dx$ is continuous with respect to $d_1$ and $\nabla W$ is in $C^1(\Tt^d)$, then $\nabla W \ast \rho$ is continuous in time and is in $C^1(\Tt^d)$. Since $\nabla \phi \in C^\alpha(\Tt^d)$ the vector field $b$ is continuous in time and is in $C^\alpha(\Tt^d)$. This implies that the first term of the right hand side of the above equation, $\div\left[\varphi_{\epsi}\star (b\rho)\right]$, is a bounded function, hence $\rho_{\epsi}:=\varphi_{\epsi}\star\rho$ is a classical solution due to classical regularity of the heat equation.
	
	Multiplying it by $\rho_{\epsi}$ and integrating in space we get by integration by parts
\begin{align*}
\frac{d}{dt}{\|\rho_{\epsi}(\cdot,t)\|_{L^2(\Tt^d)}^2}
=&\,2\int_{\Tt^d}\left[-|\nabla \rho_{\epsi}|^2\,-\,\varphi_{\epsi}\star (b\rho)\cdot \nabla \rho_{\epsi}\right] dx\\
\leq&
\frac 1 2\int_{\Tt^d} (\varphi_{\epsi}\star (b\rho))^2dx
\leq \frac 1 2 \|b\|_{\infty}  \int_{\Tt^d} (\varphi_{\epsi}\star\rho)^2dx.
\end{align*}
Since $\|b\|_{\infty}\leq \|\nabla W\|_{\infty}\,+\,\|\nabla \phi\|_{\infty}<+\infty$, Gronwall's inequality implies
\[
\|\rho_{\epsi}(\cdot,t)\|_{L^2(\Tt^d)}^2\leq e^{\frac 1 2 C t} \|\rho_{\epsi}(\cdot,0)\|_{L^2(\Tt^d)}^2.
\]
By taking the limit $\epsi\to 0$ we finally obtain $\|\rho\|_{L^{\infty}([0,T], L^2(\Tt^d) )}<+\infty.$
\end{proof}


\section{Lipschitz continuity and H\"older regularity}\label{Sec:Lip}

In this section we detail the proofs of Propositions \ref{proposition.bounds} and \ref{proposition_c1alpha}. Under the assumption $W\in W^{2,\infty}$ the coefficients of the equation \eqref{eq:HJB} are $C^1$ in the $x$ variable. As a consequence, we have that $\Phi$ is $C^2$ in the $x$ variable. This fact can be verified through a Hopf-Cole transformation; see e.g. \cite[Section 3.2]{cardaliaguet}.

We resort to the nonlinear adjoint method, introduced by L. C. Evans in \cite{Evans.adjoint}. For a detailed discussion on the application of this method to the theory of mean field games, we refer the reader to \cite{GPatVrt,pim1}.

Start by fixing $(x_0,t_0)\in\Tt^d\times[0,T]$ and consider two adjoint variables $\eta$, $\zeta$ given by the equations
	\begin{equation*}\label{aux1}{\textit{Optimal Flow:\quad}}
	\begin{cases}
	\eta_t\,=\,\div\left( (\nabla W\star \rho)\eta\,+\,\nabla \Phi\eta\right)\,+\,\Delta \eta&\;\;\;\;\;\mbox{in}\;\;\;\;\;\td\times(0,T)\\
	\eta(\cdot,t_0)\,=\,\delta_{x_0}&\;\;\;\;\;\mbox{in}\;\;\;\;\;\td
	\end{cases}
	\end{equation*}
	and
	\begin{equation}\label{aux0}{\textit{Zero velocity Flow:\quad}}
	\begin{cases}
	\zeta_t\,=\,\div\left( (\nabla W\star \rho)\zeta\right)\,+\,\Delta \zeta&\;\;\;\;\;\mbox{in}\;\;\;\;\;\td\times(0,T)\\
	\zeta(\cdot,t_0)\,=\,\delta_{x_0}&\;\;\;\;\;\mbox{in}\;\;\;\;\;\td.
	\end{cases}
	\end{equation}
	The first one represents the evolution of the distribution of the optimal trajectories in the stochastic optimal control problem associated with the first equation in \eqref{Eq.main-secondorder}. The second one accounts for the evolution of the distribution of the trajectory corresponding to the zero velocity. Note that $$\int_{\Tt^d}\eta(x,t)dx\,=\,\int_{\Tt^d}\zeta(x,t)dx\,=\,1$$ and $\eta,\,\zeta\geq 0$.
	
	Note that in general $\eta$ and $\zeta$ are measure valued. However since the vector fields $(\nabla W\star \rho)\,+\,\nabla \Phi$ and $(\nabla W\star \rho)$ are $C^1$ in space, for times $t>t_0$, $\eta$ and $\zeta$ can be viewed as functions. Alternatively, to make the rest of the arguments rigorous one can approximate the delta distributions $\delta_{x_0}$ by smooth approximations of unity and pass to the limit at the end. Below we will avoid these complications for simplicity.

\begin{proof}[Proof of Proposition \ref{proposition.bounds}]
We split the proof of the proposition in several steps.

\bigskip

 \begin{stp}\label{Stepl2}
 	\textbf{An estimates on $\int_{t_0}^T\int_{\Tt^d}\frac{|\nabla\Phi|^2}{2}\eta dxdt$:}
 \end{stp}
 We integrate \eqref{eq:HJB} against the measure $\zeta$ and use the equation \eqref{aux0} in the distributional sense to obtain
	$$
	\Phi(x_0,t_0)\!-\!\int_{\Tt^d}\phi_T\zeta_T dx+\!\!\int_{t_0}^T\!\!\!\int_{\Tt^d}\!\!\frac{|\nabla\Phi|^2}{2}\zeta dxdt+\!\!\int_{t_0}^T\!\!\!\int_{\Tt^d}\!\!\nabla W\star (\rho \nabla \phi)\zeta dx dt=\!\int_{t_0}^T\!\!\!\int_{\Tt^d}\! U\zeta dxdt.
	$$
	Subsequently, we infer that
	\begin{equation}\label{Eq.Phiupper}
	\Phi(x_0,t_0)\leq \|\nabla W\|_{\infty}\int_{t_0}^T\|\nabla \phi(\cdot, t)\|_{\infty}dt+(T-t_0)\|U\|_{\infty}+\|\phi_T\|_{\infty}.
	\end{equation}
Similarly, multiplying \eqref{eq:HJB} by $\eta$ and using \eqref{aux1} in the distributional sense, we have
	\[
	\Phi(x_0,t_0)-\int_{\Tt^d}\phi_T\eta_T dx-\int_{t_0}^T\!\!\!\int_{\Tt^d}\frac{|\nabla\Phi|^2}{2}\eta dxdt+\int_{t_0}^T\!\!\!\int_{\Tt^d}\nabla W\star (\rho \nabla \phi)\eta dx dt=\int_{t_0}^T\!\!\!
	\int_{\Tt^d}U\eta dxdt.
	\]
	Thus, we get
	\begin{equation}\label{Eq.Philow}
	\int_{t_0}^T\!\!\!\int_{\Tt^d}\frac{|\nabla\Phi|^2}{2}\eta dxdt\leq \Phi(x_0,t_0)+\|\phi_T\|_{\infty}+\|\nabla W\|_{\infty}\int_{t_0}^T\|\nabla \phi(\cdot, t)\|_{\infty}dt+(T-t_0)\|U\|_{\infty}\,.
	\end{equation}
	From \eqref{Eq.Phiupper} and \eqref{Eq.Philow} we deduce
	\ba\label{Eq.Philower}
	\int_{t_0}^T\!\!\!\int_{\Tt^d}\frac{|\nabla\Phi|^2}{2}\eta dxdt	\leq 2\|\phi_T\|_{\infty}+2\|\nabla W\|_{\infty}\int_{t_0}^T\|\nabla \phi(\cdot, t)\|_{\infty}dt+2(T-t_0)\|U\|_{\infty}.
	\ea
\

\begin{stp}\label{Steplip}
	\textbf{Lipschitz continuity in space:}
\end{stp}
	Fix $\xi\in\Rr^d$ and let $u=\nabla\Phi\cdot\xi:=\Phi_{\xi}$, then differentiating the equation for $\Phi$, we get	
	\[
	-u_t+\nabla\Phi\cdot \nabla u+(\nabla W_{\xi} \star \rho)\cdot \nabla \Phi+(\nabla W\star \rho)\cdot \nabla u+\nabla W_{\xi}\star (\rho \nabla \phi)=\Delta u+U_{\xi}.
	\]
As above, integrating the equation for $\Phi$ against $\zeta$ and using  integration by parts, we obtain
	\begin{equation*}
	u(x_0,t_0)-\!\int_{\Tt^d}\!\!\!\nabla\phi_T\eta_T dx+\int_{t_0}^T\!\!\!\int_{\Tt^d}\!\!(\nabla W_{\xi} \star \rho)\cdot \nabla \Phi\eta dxdt+\int_{t_0}^T\!\!\!\int_{\Tt^d}\!\!\!\nabla W_{\xi} \star (\rho \nabla \phi)\eta dx dt=\!\!\int_{t_0}^T\!\!\!\int_{\Tt^d}\!\! U_{\xi}\eta dxdt.
	\end{equation*}
	Hence
	\begin{equation*}
	|u(x_0,t_0)|\leq\|\nabla\phi_T\|_{\infty}+\|\nabla^2 W\|_{\infty}\int_{t_0}^T\!\!\!\left(\int_{\Tt^d} |\nabla \Phi|\eta dx+ \|\nabla \phi(\cdot, t)\|_{\infty}\right)dt+(T-t_0)\|\nabla U\|_{\infty}+C.
	\end{equation*}
	Since 
	\[\int_{t_0}^T\!\!\!\int_{\Tt^d} |\nabla \Phi|\eta dxdt\leq\frac 1 2 \int_{t_0}^T\!\!\!\int_{\Tt^d} [1+|\nabla \Phi|^2]\eta dxdt,\]
	we use \eqref{Eq.Philower} to produce
	\begin{equation*}
	\begin{split}
	|u(x_0,t_0)|\leq A+B\int_{t_0}^T\|\nabla \phi(\cdot, s)\|_{\infty}ds
	\end{split}
	\end{equation*}
with
$$
	A=\|\nabla\phi_T\|_{\infty}+\|\nabla^2 W\|_{\infty}\left(\frac1 2 T+2\|\phi_T\|_{\infty}
	+2T\|U\|_{\infty}\right)+T\|\nabla U\|_{\infty}+C
$$
and $B=\|\nabla^2 W\|_{\infty}\left(2\|\nabla W\|_{\infty}+1\right)$.
Since $x_0$ and $t_0$ were arbitrary, we have proved 
	\[
	\|\nabla\Phi(\cdot,t)\|_{\infty}\leq A+B\int_{t}^T\|\nabla \phi(\cdot, s)\|_{\infty}ds,\quad \forall t\in[0,T].
	\]
	Now it is easy to check that
	\[
	\|\nabla \phi(\cdot,t)\|_{\infty}\leq Ae^{B(T-t)}\implies \|\nabla \Phi(\cdot,t)\|_{\infty}\leq  Ae^{B(T-t)}.
	\]
This together with \eqref{Eq.Phiupper} and \eqref{Eq.Philow} yields $\|\Phi\|_{\infty}\leq C_1,$ where $C_1$ depends on $\|W\|_{\mathcal{C}^2}$,$\|U\|_{\mathcal{C}^2}$,$\|\phi_T\|_{\mathcal{C}^1}$, $A$ and $B$.
\end{proof}


Next we present the proof of Proposition \ref{proposition_c1alpha}.

\begin{proof}[Proof of Proposition \ref{proposition_c1alpha}]
We observe that \eqref{eq:HJB} can be written as
\[
	-\Phi_t\,-\,\Delta\Phi\,=\,f(x,t)\,\in\,L^\infty(\mathbb{T}^d\times[0,T]),
\]
with a bound on $\|f\|_{L^\infty}$ depending only on $\|W\|_{\mathcal{C}^2}$,$\|U\|_{\mathcal{C}^2}$,$\|\phi_T\|_{\mathcal{C}^1}$, $A$ and $B$.
Therefore, by standard elliptic regularity theory, we have
\[
	\Phi_t,\,\nabla^2 \Phi\,\in\,L^p(\mathbb{T}^d\times[0,T]),
\]
for every $p>1$, see for example \cite[Theorem D.1. and Remark D.1.4]{lionsbook}. Morrey's Embedding Theorem implies the result.
\end{proof}



\section{Proof of Theorem \ref{teo-main}}\label{Sec:mapcont}

In this section we present the proof of Theorem \ref{teo-main}. In fact, we show that $\mathcal{F}$ restricted to $\mathcal{K}_{\alpha}$ is a continuous mapping. This fact builds upon Propositions \ref{prop_fpeu}-\ref{proposition_c1alpha} to cover the existence of a fixed point of $\mathcal{F}$ under the scope of the Schauder's Fixed-Point Theorem \cite[Theorem 11.1]{GilTru}.

\begin{theorem*}[Schauder's Fixed-Point Theorem]
Let $K$ be a compact convex set in a Banach space $\mathbb{B}$ and let $T$ be a
continuous mapping of $K$ into itself. Then $T$ has a fixed point, that is, $Tx\,=\,x$ for
some $x\in\mathbb{B}$. 
\end{theorem*}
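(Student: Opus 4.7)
The plan is to reduce the infinite-dimensional statement to Brouwer's fixed-point theorem in finite dimensions via a Schauder projection onto finite $\varepsilon$-nets, then extract a convergent subsequence of approximate fixed points using compactness of $K$ and continuity of $T$.

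First I would use the compactness of $K$ to extract, for each $\varepsilon>0$, a finite $\varepsilon$-net $\{x_1^\varepsilon,\dots,x_{N(\varepsilon)}^\varepsilon\}\subset K$, and define the Schauder projection $P_\varepsilon\colon K\to K$ by
\[
P_\varepsilon(x)\,=\,\frac{\sum_{i=1}^{N(\varepsilon)}\mu_i^\varepsilon(x)\,x_i^\varepsilon}{\sum_{i=1}^{N(\varepsilon)}\mu_i^\varepsilon(x)},\qquad \mu_i^\varepsilon(x)\,=\,\max\bigl(0,\,\varepsilon-\|x-x_i^\varepsilon\|_{\mathbb{B}}\bigr).
\]
The denominator is strictly positive because the $x_i^\varepsilon$ form an $\varepsilon$-net. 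A routine computation (writing $P_\varepsilon(x)-x$ as a convex combination of the vectors $x_i^\varepsilon-x$ whose coefficients vanish unless $\|x-x_i^\varepsilon\|<\varepsilon$) shows $\|P_\varepsilon(x)-x\|_{\mathbb{B}}\leq \varepsilon$ for every $x\in K$. Moreover, since $K$ is convex, the image of $P_\varepsilon$ lies in $K_\varepsilon:=\mathrm{conv}\{x_1^\varepsilon,\dots,x_{N(\varepsilon)}^\varepsilon\}\subset K$, which is a compact convex subset of the finite-dimensional affine span of the net.

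Next I would consider the composition $T_\varepsilon:=P_\varepsilon\circ T\restriction_{K_\varepsilon}\colon K_\varepsilon\to K_\varepsilon$. Since $T$ is continuous on $K$ and $P_\varepsilon$ is continuous by construction, $T_\varepsilon$ is a continuous self-map of the compact convex subset $K_\varepsilon$ of a finite-dimensional normed space. Brouwer's fixed-point theorem then supplies a point $y_\varepsilon\in K_\varepsilon$ with $T_\varepsilon(y_\varepsilon)=y_\varepsilon$, and the projection bound gives
\[
\|y_\varepsilon-T(y_\varepsilon)\|_{\mathbb{B}}\,=\,\|P_\varepsilon(T(y_\varepsilon))-T(y_\varepsilon)\|_{\mathbb{B}}\,\leq\,\varepsilon.
\]

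Finally, taking a sequence $\varepsilon_k\downarrow 0$, the points $y_{\varepsilon_k}$ lie in the compact set $K$, so a subsequence converges to some $y^\star\in K$. Continuity of $T$ yields $T(y_{\varepsilon_k})\to T(y^\star)$, and combining with the estimate above gives $y^\star=T(y^\star)$, establishing the theorem.

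The main obstacle is the passage from infinite dimensions to finite dimensions: the Schauder projection construction and the verification that $\|P_\varepsilon(x)-x\|_{\mathbb{B}}\leq\varepsilon$ with $P_\varepsilon(K)\subset K$ are the technical heart of the argument. Once this reduction is in hand, the existence of an approximate fixed point reduces to an invocation of Brouwer's theorem (itself nontrivial but standard), and the limit passage is a soft compactness/continuity argument.
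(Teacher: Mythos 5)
Your proof is correct and is the standard Schauder-projection argument: finite $\varepsilon$-net, Brouwer's theorem on the finite-dimensional convex hull, approximate fixed points, and a compactness limit. The paper does not prove this statement at all---it simply quotes the theorem and cites Gilbarg--Trudinger, whose proof of Theorem 11.1 is essentially the same argument you give---so there is nothing in the paper for your approach to diverge from.
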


\begin{proof}[Proof of Theorem \ref{teo-main}]
Take $0<\beta<\alpha<1$, it is obvious that $\mathcal{K}_{\alpha}$ is a closed and convex set which is compact in $\mathcal{C}^{\beta, 1+\beta}(\td\times[0,T])$. Its compactness follows from Propositions \ref{proposition.bounds} and \ref{proposition_c1alpha}, together with the Arzel\`a-Ascoli Theorem.  The fact that $\mathcal{F}({\mathcal{K}_{\alpha}})\subset \mathcal{K}_{\alpha}$ follows from Proposition \ref{proposition.bounds}. 

It remains to prove that the map $\mathcal{F}\big|_{\mathcal{K}_{\alpha}}$ is continuous in $\mathcal{C}^{\beta, 1+\beta}(\td\times[0,T])$. For that we need to show that for any sequence 
$(\phi_n)_{n\in\mathbb{N}}\subset \mathcal{K}_{\alpha}$ with $\phi_n\to \phi\;\;\; \mbox{in}\;\;\; \mathcal{C}^{\beta, 1+\beta}(\td\times[0,T])$, we have $\Phi_n:=\mathcal{F}(\phi_n)\to\mathcal{F}(\phi),$ in $\mathcal{C}^{\beta, 1+\beta}(\td\times[0,T])$. We split the proof in two steps.
\bigskip

\noindent{{\bf Step 1}} {\it Stability property of the Fokker-Planck equation:}
 Let $(\rho_n)_{n\in\mathbb{N}}$ be such that
	\[
	\frac{\partial \rho_n}{\partial t}=\div\left( (\nabla W\star \rho_n)\rho_n+\nabla \phi_n\rho_n\right)+\Delta \rho_n\;\;\;\;\;\mbox{in}\;\;\;\;\;\td\times(0,T),
	\]
under the initial condition $\rho_n(x,0)\,=\,\rho_0(x).$ We will prove that $\rho_n\to\rho$ in $L^\infty(0,T; L^2(\Tt^d))$.
We have $$\|\phi_n-\phi\|_{L^\infty(\td\times[0,T])}\to 0, \;\;\;\|\nabla \phi_n\|_{L^\infty(\td\times[0,T])}\leq Ae^{BT}\;\;\; \mbox{and}\;\;\;\left\|\nabla \phi_n(\cdot,t)\right\|_{\mathcal{C}^{\alpha}(\td)}\leq C;$$ hence, we can conclude that $\nabla \phi_n(\cdot, t)\to \nabla \phi (\cdot, t),$ uniformly in $\Tt^d$. In particular, by the Lebesgue's Dominated Convergence Theorem
	\[
	\int_0^T\int_{\td}\rho^2(\nabla\phi_n-\nabla\phi)^2dxdt\to 0.
	\]
	Now, denote $m^n=\rho_n-\rho$; then
	\[
	m^n_t-\div((\nabla W\star \rho)m^n)-\div((\nabla W\star m^n)\rho)-\div(\nabla\phi_n m^n) -\div\left(\rho(\nabla\phi_n-\nabla\phi)\right)=\Delta m^n,
	\]
	under the homogeneous initial condition $ m_n(x,0)=0.$ Proceeding analogously to the previous section, we get
\begin{align*}
	\frac{d}{dt}{\|m^n(\cdot,t)\|_{L^2(\Tt^d)}^2}
	=&\,2\int_{\Tt^d}\left[-|\nabla m^n|^2-m^n\nabla m^n\cdot \nabla\phi_n-\rho \nabla m^n\cdot (\nabla\phi_n-\nabla\phi)\right] dx\\
	&+ 2\int_{\Tt^d}\left[(\nabla W\star \rho)m^n \nabla m^n +(\nabla W\star m^n)\nabla m^n\rho\right]dx\,.
\end{align*}
	Therefore, using the inequality $a\,b\leq\, \epsi a^2\,+\,\frac{b^2}{4\epsi}$, with $\epsi$ small enough and $a\,=\,\nabla m^n$, on each term above starting from the second one , we obtain
	\[
	\frac{d}{dt}\|m^n(\cdot,t)\|_{L^2(\Tt^d)}^2\leq C(\|\nabla\phi_n\|_{\infty}^2+\|\nabla W\|_{L^2}^2\|\rho\|^2_{L^2})\|m^n(\cdot,t)\|_{L^2(\Tt^d)}^2+C\|\rho (\nabla\phi_n-\nabla\phi)\|_{L^2(\Tt^d)}^2.
	\]
	Thus, by Gronwall's inequality, we conclude
	\[
	\|m^n(\cdot,t)\|_{L^2(\Tt^d)}^2\leq \|\rho (\nabla\phi_n-\nabla\phi)\|_{L^2([0,t]\times \Tt^d)}^2e^{C(T-t)}\to 0.
	\]
As a result, $$\rho_n\to \rho\;\;\;\mbox{in}\;\;\;L^{\infty}([0,T], L^2(\Tt^d)),$$as desired.

\bigskip

\noindent{\bf Step 2} {\it Stability and Uniqueness of viscosity solutions for the Hamilton-Jacobi equation:} We now examine the convergence $\Phi_n\to \Phi$. 
We first observe that
	\[
	\|(\nabla W\star \rho_n)-(\nabla W\star \rho)\|_{\infty}\leq \|\nabla W\|_{L^2}\|\rho_n-\rho\|_{L^2}\to 0
	\]
and
	\[
	\|U(\cdot, \rho_n)-U(\cdot, \rho)\|_{\infty}\leq C\|\rho_n-\rho\|_{L^2}\to 0.
	\]
Since $\nabla \phi_n(\cdot, t)\to \nabla \phi (\cdot, t)$ uniformly due to our functional setting, then
$$
\nabla W\star \rho \nabla \phi_n\longrightarrow \nabla W\star \rho \nabla \phi
$$ 
uniformly on $[0,T]\times\Tt^d.$ Hence, the nonlocal terms
	\begin{align*}
	|\nabla W\star(\rho_n\nabla \phi_n)-\nabla W\star(\rho\nabla \phi)|\,\leq &\, \|\nabla\phi_n\|_{\infty}\|\nabla W\|_{L^2}\|\rho_n-\rho\|_{L^2}\\
	&+\,|\nabla W\star\rho (\nabla\phi_n-\nabla\phi)|
	\end{align*}
converge to zero uniformly.
By the compactness of $\mathcal{K}_{\alpha}$ there is a converging subsequence $\Phi_{n_k}\to\tilde{\Phi}$, in $\mathcal{C}^{\beta, 1+\beta}(\td\times[0,T])$, in particular $\nabla \Phi_{n_k}\to\nabla\tilde{\Phi}$ uniformly. Then $\tilde{\Phi}$ is a weak solution to
	\[
	-{\tilde\Phi}_t+\frac{|\nabla{\tilde\Phi}|^2}{2}+(\nabla W\star \rho)\cdot \nabla {\tilde\Phi}+\nabla W\star (\rho \nabla \phi)=\Delta {\tilde\Phi} +U(x,\rho),\,\qquad {\tilde\Phi}(x,T)=\phi_T(x).
	\]
By parabolic regularity $\tilde\Phi$ is also a classical solution. Thus $\Phi$ and $\tilde\Phi$ solve the same equation, to prove that $\Phi=\tilde{\Phi},$ we introduce two adjoint variables $\eta, \tilde{\eta}$ which solve the following equations respectively
	\begin{equation*}
	\begin{cases}
	\frac{\partial \eta}{\partial t}=\div\left( (\nabla W\star \rho)\eta+\nabla \Phi\eta\right)+\Delta \eta,\\
	\eta(\cdot,0)=\delta_{x_0},
	\end{cases}
	\end{equation*}
and
	\begin{equation*}
	\begin{cases}
	\frac{\partial \tilde\eta}{\partial t}=\div\left( (\nabla W\star \rho)\tilde\eta+\nabla \tilde\Phi\tilde\eta\right)+\Delta \tilde\eta,\\
	\tilde\eta(\cdot,0)=\delta_{x_0}.
	\end{cases}
	\end{equation*}
By subtracting the equation for $\Phi$ from the one for $\tilde{ \Phi}$, we deduce
\[
-\left({\tilde\Phi}-\Phi\right)_t+\frac{|\nabla{\tilde\Phi}|^2}{2}-\frac{|\nabla{\tilde\Phi}|^2}{2}+(\nabla W\star \rho)\cdot \nabla \left({\tilde\Phi}- \Phi\right)=\Delta \left({\tilde\Phi}-\Phi\right),
\]
and similarly,
\[
(\tilde{\eta}-\eta)_t=\div\left( (\nabla W\star \rho)(\tilde{\eta}-\eta)+\nabla \tilde\Phi\tilde\eta-\nabla \Phi\eta\right)+\Delta (\tilde\eta-\eta)\,.
\]
Multiplying the first equation above by $\tilde{\eta}-\eta$ and subtracting the second equation multiplied by ${\tilde\Phi}-\Phi$ and then integrating by parts on $[0,T]\times\Tt^d$, we obtain
\[
\int_{\td}\!\!\!({\tilde\Phi}-\Phi)({\tilde\eta}-\eta)dx\big|_0^T=\int_0^T\!\!\!\int_{\td}\!\!\left[  \left(\frac{|\nabla{\tilde\Phi}|^2}{2}-\frac{|\nabla{\tilde\Phi}|^2}{2}\right)\!({\tilde\eta}-\eta) -\left(\nabla \tilde\Phi\tilde\eta-\nabla \Phi\eta\right) \!\cdot\! \nabla({\tilde\Phi}-\Phi) \right]\!\!dx dt.
\]
Recalling the terminal conditions on $\Phi$, $\tilde\Phi$ and initial conditions on $\eta, \tilde{\eta}$, we have that the left hand side of the equality is zero, and thus
\[
\int_0^T\!\!\!\int_{\td}\frac{|\nabla{\tilde\Phi} -\nabla{\Phi} |^2}{2}({\tilde\eta}+\eta)dx dt=0.
\]
Hence $\nabla{\tilde\Phi} = \nabla{\Phi}$ and then $\Phi _t=\tilde{ \Phi}_t$. Since $\Phi (T, x)=\tilde{ \Phi}(T, x)=\phi_T(x)$, we obtain  $\Phi =\tilde{ \Phi}$. The above arguments show that any subsequence of the sequence $\{\Phi_n\}_{n\geq 1}$ has a further subsequence converging to $\Phi$ in  $\mathcal{C}^{\beta, 1+\beta}(\td\times[0,T])$. This proves that $\Phi_n\to \Phi$ in $\mathcal{C}^{\beta, 1+\beta}(\td\times[0,T])$.
Thus we have proved that the conditions of Schauder's Fixed-Point Theorem are satisfied, we infer that $\mathcal{F}$ has a fixed point in $\mathcal{K}_{\alpha}$ and the proof is complete.
\end{proof}

{\small {\bf Acknowledgements.-}
JAC was partially supported by the EPSRC grant number EP/P031587/1.
EAP was partially supported by FAPERJ (\# E26/200.002/2018), CNPq-Brazil (\#433623/2018-7 and \#307500/2017-9) and Instituto Serrapilheira.
We would like to acknowledge the Institute Mittag-Leffler, Imperial College London and King Abdullah University of Science and Technology for hosting us and providing with constant help and vivid research environment.}

\bibliographystyle{siam}\bibliography{mfg} 

\end{document}